\documentclass[A4paper,reqno,oneside,11pt]{amsart}
\usepackage[foot]{amsaddr}

\usepackage{fullpage}
\usepackage{setspace}
\usepackage{amsmath, amssymb, amsfonts, amsthm, bbm, mathtools, mathrsfs}
\usepackage{tikz}
\usepackage{subcaption}
\usepackage{verbatim,enumitem,graphics}
\usepackage{xcolor}
\usepackage[backref=page]{hyperref}
\hypersetup{%
    colorlinks,
    linkcolor={red!50!black},
    citecolor={green!50!black},
    urlcolor={blue!50!black}
}
\usepackage{dsfont}
\usepackage{lineno}
\usepackage[normalem]{ulem}
 \usepackage[T1]{fontenc}

\allowdisplaybreaks

\theoremstyle{plain}
\newtheorem{theorem}{Theorem}[section]
\newtheorem*{theorem*}{Theorem}
\newtheorem{lemma}[theorem]{Lemma}

\newtheorem{corollary}[theorem]{Corollary}

\theoremstyle{definition}

\newtheorem{claim*}{Claim}
\newtheorem{remark}[theorem]{Remark}
\newtheorem*{remark*}{Remark}

\newcommand{\N}{\mathbb{N}}
\newcommand{\Z}{\mathbb{Z}}

\newcommand{\coldeg}{d_{\mathrm{col}}}

\def\a{\alpha}

\def\l{\ell}

\makeatletter
\def\moverlay{\mathpalette\mov@rlay}
\def\mov@rlay#1#2{\leavevmode\vtop{%
   \baselineskip\z@skip \lineskiplimit-\maxdimen
   \ialign{\hfil$\m@th#1##$\hfil\cr#2\crcr}}}
\newcommand{\charfusion}[3][\mathord]{
    #1{\ifx#1\mathop\vphantom{#2}\fi
        \mathpalette\mov@rlay{#2\cr#3}
      }
    \ifx#1\mathop\expandafter\displaylimits\fi}
\makeatother

\newcommand{\dcup}{\charfusion[\mathbin]{\cup}{\cdot}}

\begin{document}

\setstretch{1.27}

\title{Improved Ramsey bounds for generalized Schur equations}

\author{Rafael Miyazaki}
\author{Eion Mulrenin}
\author{Cosmin Pohoata}
\author{Michael Zheng}
\address{Department of Mathematics, Emory University, Atlanta, GA 30322, USA}
\email{\{rafael.kazuhiro.miyazaki|eion.mulrenin|cosmin.pohoata|xzhe226\}@emory.edu}

\begin{abstract}
    We show that for $m, r \in \mathbb{N}$ and $N > (2m+1)^r (r!)^{1/m}$, every $r$-coloring of the integers in the interval $[N]$ contains a monochromatic solution to the equation
    \[
    x_1 + \dots + \dots x_{m+1} = y_1 + \dots + y_m.
    \]
    This generalizes and improves recent results of Ko\'scuiszko. We also show that if $N \geq 2^{r}$, then every $r$-coloring of the integers in $[N]$ must always determine a monochromatic solution to the above equation for some $m \geq 1$. The latter estimate is optimal. 
\end{abstract}

\maketitle


\section{Introduction}

Schur's theorem~\cite{Schur} is an early landmark result in Ramsey theory which states that for $r \in \N$ and $N > e r!$, every partition of the set of the first $N$ natural numbers $[N] = A_1 \dcup A_2 \dcup \dots \dcup A_r$ contains a solution to the equation $x + y = z$ with $x$, $y$, and $z$ (which need not all be distinct) all lying in the same part $A_i$ for some $1 \leq i \leq r$.
Typically, such a partition is called an {\it $r$-coloring}, the parts $A_i$ are called {\it color classes}, and a set of integers all lying in one color classes is called {\it monochromatic}, and we will use this terminology in the sequel.
In this language, one may concisely express Schur's theorem as the statement that for $N > er!$, every $r$-coloring of $[N]$ contains a monochromatic solution to the equation $x+y=z$. 

In his Ph.D. thesis, supervised by Schur, Rado (see, e.g., \cite[\S 2.5]{Promel}) gave a complete characterization of systems of linear questions which are {\it partition regular}, i.e., for which there exists a monochromatic solution under every $r$-coloring of $[N]$ for $N$ sufficiently large.
While his characterization for systems is rather complicated, it is easy to state for single equations: namely, Rado's theorem asserts that an equation $\sum_{i=1}^n \a_i x_i = 0$ with $\a_1, \dots, \a_n \in \Z$ constants and $x_1, \dots, x_n \in [N]$ variables is partition regular if and only if there exists a nonempty subset $I \subseteq [n]$ of indices for which $\sum_{i \in I} \a_i = 0$.

Thus, given such an equation and a number of colors, one may ask for a quantitative estimate on how large one must take $N$ for Rado's theorem to hold.
Cwalina and Schoen~\cite{CS17} introduced the study of the quantitative aspects of Ramsey properties of more generalized Schur equations of the form
\begin{equation}
\label{eq: longer-schur}
    x_1 + \dots + x_{m+1} = y_1 + \dots + y_m,
\end{equation}
which trivially satisfy Rado's condition. Here, we follow their notation and definition. Given $m, r \in \N$, we let $S_m(r)$ denote the least $N$ for which every $r$-coloring of the set $[N]$ produces a monochromatic solution to the equation~\eqref{eq: longer-schur}.

In this language, it is not hard to see that $S_{i}(r) \leq S_j(r)$ for $i > j$. 
However, even the qualitative growth of $S_1(r)$ remains unknown: while Schur's original paper \cite{Schur} gives $(3^r+1)/ 2 < S_1(r) \leq \lfloor e r!\rfloor +1$, progress on improving these bounds has been slow, with the current state of the art being $(\sqrt[5]{380})^r = (3.28\dots)^r \ll S_1(r) \leq (e- 1/6) r!$; see \cite{AH, ACPPRT22+,  E94, FS00, Wan97, Whitehead73, XXC02}. Meanwhile, Cwalina and Schoen showed that $S_2(r) \leq r^{-c \log r/ \log \log r} r!$ for an absolute constant $c$. This was recently improved by Ko\'sciuszko, who showed~\cite[Theorem 4]{K25} that $S_2(r) \leq 3^r \sqrt{(r+1)!}$. 

Our first result gives a new bound on $S_m(r)$ for $m \geq 3$ and all~$r$.

\begin{theorem}
\label{thm: main-balanced}
Let $m, r \in \N$.
If there exists an $r$-coloring of $[N]$ with no monochromatic
solution to \eqref{eq: longer-schur}, then $N \leq (2m+1)^r (r!)^{1/m}$. 
Equivalently,
\[
 S_m(r) \leq (2m+1)^r (r!)^{1/m}+1.
\]
\end{theorem}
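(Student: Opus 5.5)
The plan is to run Schur's classical iterated-difference argument, but to let each colour class rule out a much larger set than its difference set $A-A$. The basic observation is about sums of differences. Let $A$ be a colour class with no solution to \eqref{eq: longer-schur}, and let $s=\sum_{i=1}^{j}(a_i-a_i')$ with $1\le j\le m$, $a_i,a_i'\in A$ and $s>0$. Then $s\notin A$. Indeed, if $s\in A$, then for any $b\in A$,
\[
a_1'+\dots+a_j'+s+\underbrace{b+\dots+b}_{m-j}=a_1+\dots+a_j+\underbrace{b+\dots+b}_{m-j},
\]
which has $m+1$ terms on the left and $m$ on the right, all in $A$. So the positive part of $\bigcup_{j\le m} j(A-A)$ avoids the colour of $A$, and the same holds for any colour class already eliminated at an earlier stage. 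This is the standard reason Schur's proof works, with $j=1$ replaced by all $j\le m$.

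The iteration goes as follows. Set $X_0=[N]$. At stage $k$ we will have a set $X_k\subseteq[N]$ using at most $r-k$ colours. Let $A_{k+1}$ be its largest colour class, so $|A_{k+1}|\ge |X_k|/(r-k)$. We then pass to a set $X_{k+1}$ of positive sums of at most $m$ differences of elements of $A_{k+1}$. For the old colours to stay excluded, we must arrange that the differences used at every stage are themselves sums of at most $m$ differences of each earlier class. I would enforce this by choosing $X_{k+1}$ inside a structured piece of $A_{k+1}$: take a maximal element $a^*$ and form differences $a^*-a$, exactly as Schur does, so that elements of $X_{k+1}$ are single differences of $A_{k+1}$. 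Sums of $\le m$ of them are then sums of $\le m$ differences of $A_{k+1}$. Since $A_{k+1}\subseteq X_k$, they are also sums of $\le m$ differences of $X_k$, and by induction of each earlier class. The process halts when $X_k$ is nonempty but no colours remain, which is a contradiction. So it suffices to show that $|X_k|$ cannot shrink too fast, with the claimed rate of decay.

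The quantitative gain comes from comparing sizes. Classically $|X_{k+1}|\ge |A_{k+1}|-1\ge |X_k|/(r-k)-1$, and iterating gives $r!$. Here I would instead track the size of the $m$-fold sumset $m X_k$, or equivalently work with the quantity $|X_k|^m$, and aim for a recursion of the shape
\[
|X_{k+1}|^m \;\ge\; \frac{|X_k|^m}{(2m+1)^m\,(r-k)}.
\]
That is, a pigeonhole over $m$-tuples costs only a factor $r-k$ on $m$-th powers. The factor $(2m+1)$ should come from containment: sums of up to $m$ differences of elements of $[1,M]$ lie in $[-mM,mM]$, a window of length about $(2m+1)M$, so the number of distinct values of such sums controls $|X_{k+1}|$ up to that factor. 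Iterating from $|X_0|=N$ down to a nonempty set gives $N^m\le (2m+1)^{rm}\,r!$, which is $N\le (2m+1)^r(r!)^{1/m}$.

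The main obstacle is justifying this recursion: producing, from a colour class of density $1/(r-k)$ in $X_k$, a new set whose \emph{$m$-fold} structure is only a factor $r-k$ smaller, rather than whose size is. For this I would first strengthen the inductive hypothesis. Instead of a set $X_k$, I would carry a family of $m$-tuples, or a set together with its $m$-fold sumset, in which every element of $mX_k$ inside the window is forbidden for all earlier colours. I would then apply pigeonhole to the colouring of the sums in $mX_k$. If the plain version fails, I would try to follow Ko\'sciuszko's argument for $m=2$, which is the source of the $\sqrt{(r+1)!}$ there, and generalize its two-step bookkeeping to $m$ steps.
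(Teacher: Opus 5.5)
Your first paragraph is correct. It is the arithmetic heart of the paper's proof: a positive combination $\sum_{i\le j}(a_i-a_i')$ with $j\le m$ cannot lie in the colour class. The gap is that everything after it rests on the recursion $|X_{k+1}|^m\ge |X_k|^m/\bigl((2m+1)^m(r-k)\bigr)$. You state it as a goal but never derive it, and the scheme you actually describe cannot deliver it.

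\begin{itemize}
\item With the anchored choice $X_{k+1}=\{a^*-a: a\in A_{k+1}\}$, every element is a single difference, so only the case $j=1$ of your observation is ever used. You get back Schur's recursion $|X_{k+1}|\ge |X_k|/(r-k)-1$, i.e.\ a bound of order $e\,r!$.
\item If you instead put sums of up to $m$ differences of $A_{k+1}$ into $X_{k+1}$ to exploit $j>1$, the invariant fails at the next stage. A difference of two such elements is a combination of up to $2m$ differences of $A_{k+1}$, so it is no longer excluded from colour $k+1$.
\item The suggested remedies do not fix this. Elements of $mX_k$ need not lie in $[N]$, so they carry no colour to pigeonhole on. Containment of the sums in $[-mM,mM]$ bounds the number of sums from above, whereas you need a lower bound on $|X_{k+1}|$.
\end{itemize}

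The paper supplies two ideas that are missing here.

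First, the factor $2m+1$ is a chromatic number, not a window length. Colour each edge $uv$ of the complete graph on $[N]$ by the colour of $|u-v|$. Write each vertex $x$ of the colour-$c$ ball of radius $m$ around $a$ as $a+u_1+\dots+u_p-v_1-\dots-v_q$, with all $u_i,v_i$ of colour $c$ and $p+q\le m$, and sort the vertices by the charge $t=p-q\in\{-m,\dots,m\}$. For two vertices $x,y$ of equal charge, $x-y$ has $s=p_1+q_2=q_1+p_2\le m$ terms on each side. Your own observation then shows that each charge class is independent in colour $c$. There is also no invariant to maintain across stages, because edges between surviving vertices are always original differences $|x-y|$; nothing is ever re-based.

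Second, the saving of $(r!)^{1/m}$ in place of $r!$ does not come from a nested sequence of sets in which a colour disappears globally. It comes from the weighted induction of Lemma~\ref{lem: main}:
\begin{itemize}
\item Each vertex gets weight $\chi^{-\coldeg(x)}(\coldeg(x)!)^{-1/m}$.
\item Around a vertex $v$ of minimum colour-degree $d$, take the colour $c$ whose neighbourhood of $v$ is heaviest, and grow the colour-$c$ ball until some radius $i\le m$ has boundary $T$ of weight at most $(d^{1/m}-1)$ times that of the ball $S$. This must happen: the radius-$1$ ball already carries more than a $1/d$ share of the total weight, so $m$ growths by a factor exceeding $d^{1/m}$ would exceed the total.
\item Delete $T$ together with all but the heaviest charge class of $S$, keeping every vertex outside $S\cup T$.
\end{itemize}
Only the surviving class loses colour $c$, and its weight gain of at least $\chi d^{1/m}$ pays for the deletions. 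Colours are thus eliminated vertex by vertex rather than from the whole set. That per-vertex bookkeeping is exactly what a Schur-type iteration lacks.
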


One may also of course consider more generally equations of the form
\begin{equation}
\label{eq: schur-a-b}
    x_1 + \dots + x_a = y_1 + \dots + y_b
\end{equation}
for any pair $a, b \in \N$, which still trivially satisfy Rado's condition.
To that end, let $S_{a,b}(r)$ denote the least integer $N$ such that every $r$-coloring of $[N]$ contains a monochromatic solution to~\eqref{eq: schur-a-b}.
As a corollary of Theorem~\ref{thm: main-balanced}, we obtain a bound on $S_{a,b}(r)$ for certain combinations of $a, b \in \N$ and all~$r$.

\begin{corollary}
\label{cor: general-balanced}
Let $a > b$ be positive integers, let $d\coloneqq a-b$, and write
\[
 b=dm+w,
\qquad 0\le w<d.
\]
Assume that $m \geq 1$, or equivalently that $a \leq 2b$.
If there exists an $r$-coloring of $[N]$ with no monochromatic solution to \eqref{eq: schur-a-b}, then $N \leq (2m+1)^r (r!)^{1/m}$.
Equivalently,
\[
 S_{a,b}(r) \leq (2m+1)^r (r!)^{1/m}+1.
\]
\end{corollary}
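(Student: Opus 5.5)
We reduce Corollary~\ref{cor: general-balanced} to Theorem~\ref{thm: main-balanced}. Specifically, we show that every monochromatic solution of \eqref{eq: longer-schur}, with the parameter $m=\lfloor b/d\rfloor$, yields a monochromatic solution of \eqref{eq: schur-a-b}. An $r$-coloring of $[N]$ with no monochromatic solution to \eqref{eq: schur-a-b} then has no monochromatic solution to \eqref{eq: longer-schur}, and Theorem~\ref{thm: main-balanced} gives $N\le (2m+1)^r(r!)^{1/m}$.

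For the reduction, let $x_1,\dots,x_{m+1},y_1,\dots,y_m$ be a monochromatic solution of \eqref{eq: longer-schur}, and call $X=\sum x_i$ and $Y=\sum y_j$ the two sides, so that $X=Y$. Taking $d$ copies of each side gives a monochromatic multiset of $d(m+1)$ elements whose sum equals the sum of a monochromatic multiset of $dm$ elements. Here $d(m+1)=a-w$ and $dm=b-w$, because $a=b+d=dm+w+d$. We then append the same $w$ elements to both sides; for instance, $w$ copies of $x_1$. This keeps the equation balanced and the solution monochromatic. The left side now has $d(m+1)+w=a$ terms and the right side has $dm+w=b$ terms, so we have a monochromatic solution to $x_1+\dots+x_a=y_1+\dots+y_b$. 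Repetition of variables is allowed, since solutions need not consist of distinct integers, as in Schur's setting.

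Finally, we check the parameter conditions. Theorem~\ref{thm: main-balanced} needs $m\ge1$, which is the hypothesis. The hypothesis $m\ge1$ is equivalent to $b\ge d$, that is, $a\le 2b$, as the statement notes. The bound and the equivalent formulation for $S_{a,b}(r)$ then follow directly. There is no real obstacle here; the only point needing care is the bookkeeping of term counts with the remainder $w$.
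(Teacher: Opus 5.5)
Your proposal is correct and follows the paper's proof. The paper applies the padding lemma (Lemma~\ref{lem: padding}) with $t=d$ and the same $w$, turning a monochromatic solution of \eqref{eq: longer-schur} into one of \eqref{eq: schur-a-b} with $d(m+1)+w=a$ and $dm+w=b$ terms, and then invokes Theorem~\ref{thm: main-balanced}; your ``$d$ copies of each side plus $w$ copies of $x_1$'' construction is exactly that lemma's proof written out by hand.
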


As an application, note that for, e.g., $(a,b) = (12, 9)$ Corollary~\ref{cor: general-balanced} yields $S_{12,9}(r)\le 7^r (r!)^{1/3}+1$, which is much smaller than the $O(\sqrt{(r- k)!})$, $k = \Theta(\log r / \log \log r)$, bound from~\cite[Theorem 5]{K25}.
On the other hand, for pairs $a > b \in \N$ where Corollary~\ref{cor: general-balanced} does not apply (i.e. if $a > 2b$), we note that the repeated-Schur
argument from~\cite{K25} can be used to show that we always have that $S_{a,b}(r) \leq e(rL)!+1$, where $L \coloneqq \max \bigl\{ \lceil \log_2 a\rceil,\, \lceil \log_2 b\rceil+1 \bigr\}$. We omit the details. 

Finally, we also consider the problem of determining the least $N$ for which every $r$-coloring of $[N]$ contains a monochromatic solution to \eqref{eq: longer-schur} for {\it some} $m\in \N$. For our second result, we determine this $N$ exactly.   
\begin{theorem}\label{thm:arbitrary_m}
    Let $r\in \N$ and $N = 2^r$. In any $r$-coloring of $[N]$, there exists $m \in \N$ for which there is a monochromatic solution to the equation
    \[x_1+\cdots +x_{m+1} = y_1+\cdots+y_{m}.\]
    Furthermore, $N = 2^r$ is the minimum value for which the above property holds. In particular, $S_m(r)\ge 2^r$ for any $m\in \N$.
\end{theorem}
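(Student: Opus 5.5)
Two things need proving: every $r$-coloring of $[2^r]$ contains a monochromatic solution for some $m$, and some $r$-coloring of $[2^r-1]$ contains none for any $m$.

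\emph{Reformulation.} A monochromatic solution with $m+1$ terms on the left and $m$ on the right, all from a color class $A$, is exactly a way to write
\[
0=\sum_{a\in A} c_a\, a,\qquad c_a\in\Z,\qquad \sum_a c_a=1.
\]
Here $c_a$ is the number of times $a$ appears among the $x$'s minus the number of times it appears among the $y$'s. Conversely, given such $c_a$, put $a$ on the $x$-side $c_a$ times when $c_a>0$, and on the $y$-side $|c_a|$ times when $c_a<0$. The left side then has exactly one more term than the right. If the right side is empty, $\sum_a c_a a=0$ with positive coefficients is impossible, so $m\ge1$ automatically. Hence $A$ avoids all these equations if and only if $0$ does not lie in the affine lattice $\{\sum c_a a:\sum c_a=1\}$.

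Fix any $a_0\in A$. This affine lattice equals $a_0+g\Z$, where $g=\gcd\{a-a_0: a\in A\}$, with $g=0$ when $|A|=1$. So $A$ is solution-free exactly when $g\nmid a_0$, that is, when $A$ does not generate $0$ in the affine sense. Equivalently, there is a modulus $q\ge2$ with $A\subseteq a_0+q\Z$ and $q\nmid a_0$; for $|A|=1$ the case $a_0\neq0$ works trivially. So a solution-free class is exactly a set contained in a single nonzero residue class modulo some $q\ge2$, or a singleton.

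\emph{Lower bound construction.} Color $n\in[2^r-1]$ by its $2$-adic valuation $v_2(n)\in\{0,\dots,r-1\}$. Class $i$ lies in $2^i+2^{i+1}\Z$, a nonzero residue class modulo $2^{i+1}$. So no class has a solution, and $[2^r-1]$ admits a good $r$-coloring.

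\emph{Upper bound.} We must show $[2^r]$ cannot be covered by $r$ sets, each lying in a residue class $c_i\bmod q_i$ with $q_i\ge2$ and $q_i\nmid c_i$. I would argue by induction on $r$. The case $r=1$ holds because $\{1,2\}$ would need $q\mid 1$. For the inductive step, one class contains at most $\lceil 2^r/q\rceil\le 2^{r-1}$ elements, but removing it does not leave an interval, so the naive induction fails.

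The key trick I would use is to consider the number $2^r$, or more generally to apply the induction to the multiples of an appropriate integer. Suppose class $1$ contains $2^r$. Then $q_1\nmid 2^r$, so $q_1$ has an odd prime factor $p$ or fails to divide $2^r$; in either case I want to show the class misses every element of some dilated copy $d\cdot[2^{r-1}]\subseteq[2^r]$. With $d=2$, the class must miss all of $2\cdot[2^{r-1}]$ whenever $q_1$ is odd, or whenever $q_1$ is even and $c_1$ is odd.

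A cleaner route is to induct on the statement with variable translates. That statement is: any $2^r$ consecutive multiples $d,2d,\dots,2^rd$ cannot be covered by $r$ classes not containing $0$ in their affine hull. The classes' condition is invariant under scaling, so this reduces to the original. Let the class containing $2^r$ have modulus $q$ with $q\nmid 2^r$. If $q$ is even with $v_2(q)>v_2(\text{something})$, the class misses half of a structure.

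I expect this combinatorial covering step to be the main obstacle. As a fallback I would use a lattice argument. Take $d=\gcd$ information and note that $\{1,\dots,2^r\}$ contains $2^{r-1}$ consecutive multiples of $2$, namely $2,4,\dots,2^r$. The class containing $1$ has odd-residue constraints; I would pick whichever class contains $1$ or $2^r$ so that it misses entirely either the odds or the evens.
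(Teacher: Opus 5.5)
Your reformulation is correct and matches the paper's reduction. Your affine-lattice description $\{\sum c_a a : \sum c_a = 1\} = a_0 + g\Z$ is a cleaner route to the paper's B\'ezout claim that a solution-free class lies in a nonzero residue class. Your $2$-adic coloring of $[2^r-1]$ is exactly the paper's lower bound. The gap is the upper bound. You correctly isolate the statement that $[2^r]$ cannot be covered by $r$ residue classes $c_i+q_i\Z$, none containing $0$, but you do not prove it, and it is the entire difficulty. It is a special case of the Crittenden--Vanden Eynden theorem (Erd\H{o}s's conjecture): $k$ arithmetic progressions covering $\{1,\dots,2^k\}$ cover all of $\Z$. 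The paper invokes this theorem as a black box, and the contradiction is then immediate because $0$ lies in none of the classes. A routine induction does not deliver it.

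The specific steps you sketch do not work. The class containing $2^r$ can never miss $2\cdot[2^{r-1}]=\{2,4,\dots,2^r\}$, because $2^r$ itself lies there. So your claim that it misses this dilate ``whenever $q_1$ is odd'' is false (e.g.\ $2^r+3\Z$), and the sub-case ``$q_1$ even, $c_1$ odd'' is incompatible with containing $2^r$. Your fallback also fails: nothing forces the class containing $1$ or $2^r$ to lie in a single parity class. For instance, $1+3\Z$ contains $1$, $4$, and (for even $r$) $2^r$, and meets both parities.

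Your parity idea does close one case. If some class has even modulus and odd residue, it misses all evens, so the other $r-1$ classes cover $\{2,\dots,2^r\}$. Halving preserves ``residue class avoiding $0$'' by your scaling remark, so you can induct. The remaining case is when every class has odd modulus, or even modulus with even residue. There no such reduction exists: a class confined to the evens does not help, since the odd numbers do not rescale to an interval adjacent to an uncovered point. That case is precisely where the Crittenden--Vanden Eynden theorem is needed. Citing it, or the short proof of Balister, Bollob\'as, Morris, Sahasrabudhe and Tiba, would complete your argument along the same lines as the paper.
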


The threshold $2^r$ is reminiscent of the elementary graph-theoretic fact that if the edges of $K_N$ are colored with $r$ colors and each color class is bipartite, then $N\le 2^r$ (see, e.g.,~\cite{EG75}). 
Theorem~\ref{thm:arbitrary_m}, however, is not a direct consequence of this fact. 
When converting a coloring of $[N]$ into a difference-coloring of $K_{N+1}$ by coloring $\{a,b\}$ according to the color of $|a-b|$ (as in the standard proof of Schur's theorem by using Ramsey's theorem), an odd monochromatic cycle yields an equality between two monochromatic sums, but the two sides need not have numbers of terms differing by exactly one. 
Thus, it gives a different additive statement.
The proof of Theorem~\ref{thm:arbitrary_m} uses a sharper affine obstruction: a color class $A$ avoids equation \eqref{eq: longer-schur} for all $m \in \N$
if and only if $A$ is contained in a nonzero residue class modulo $d$ for some integer $d \geq 2$. 
This allows us to leverage instead a remarkable theorem by Crittenden and Vanden Eynden~\cite{C-VE69,C-VE70}. 
We discuss the proof of Theorem \ref{thm:arbitrary_m} in Section \ref{sect: proof-arbitrary_m}, along with some further quantitative aspects.

The remainder of the paper is organized as follows. First, in order to prove Theorem \ref{thm: main-balanced}, we make use of the recent improvement by Axenovich, Cames von Batenburg, Janzer, Michel, and Rundström~\cite{ACJMR25} on the state of the art for multicolor Ramsey numbers of odd cycles. To give slightly better bounds, we prove a sharpening of their key lemma which in turn also improves upon their bounds in Section~\ref{sect: proof-main-lem}; see Lemma~\ref{lem: main} and Remark~\ref{rem: Ramsey-odd-cycle-improv} respectively. In Section \ref{sect: proof-main-balanced}, we then use Lemma~\ref{lem: main} to prove Theorem~\ref{thm: main-balanced} and Corollary~\ref{cor: general-balanced}. 

\section{A Lemma of Axenovich, Cames von Batenburg, Janzer, Michel, and
Rundstr\"om}
\label{sect: proof-main-lem}

Recall that for a positive integer $q$, a {\it $q$-local edge-coloring} of a graph $G$ is an edge-coloring with an arbitrary number of colors in which every vertex is incident to at most $q$ of these colors.
Given an edge-coloring of a graph $G$, a vertex $x \in V(G)$, a color $c$, and $i \in \N$, we will write $N^i_c(x)$ to denote the vertices at distance exactly $i$ from $x$ in the color-$c$ subgraph of $G$, and we will set $N^{\leq i}_c(x) \coloneqq \bigcup_{j=0}^i N^j_c(x)$.
Note that $N^0_c(x) = \{x\} \subseteq N^{\leq i}_c(x)$.

By an argument of Erd\H{o}s, Faudree, Rousseau, and Schelp \cite{EFRS}, if a graph $H$ contains no cycle of length $2 \ell + 1$, then the chromatic number of $H[N^i(v)]$ for any $v \in V(H)$ and $i \in [\ell]$ is at most $2\ell -1$. Hence, given a $q$ edge-coloring of $G = K_n$ without a monochromatic $C_{2 \ell +1}$, it follows that $G[N_c^{\leq \ell}(v)]$ has bounded chromatic number, say at most $\chi$, for any $v \in V(G)$ and any color $c$. In a clever weighting argument, Axenovich et al.~\cite{ACJMR25} were then able to bound $n$ in terms of $q$, $\l$, and $\chi$, thus getting better upper bounds on the multicolor Ramsey numbers of odd cycles.

The result below is a slight sharpening of the key weighted lemma of Axenovich et al.~\cite[Lemma~2.1]{ACJMR25}. 
We replace their factor of $q^{q/\l}$ by~$(q!)^{1/\l}$.

\begin{lemma}
\label{lem: main}
Let $q,\l, \in \N$, $\chi \geq 2$,
and consider a $q$-local edge-coloring of a complete graph $G=K_n$.
Assume that for every vertex $v \in V(G)$ and every color $c$, the subgraph of color $c$
induced by $N_c^{\leq \l}(v)$ has chromatic number at most $\chi$. 
Then
\[
 n \leq \chi^q (q!)^{1/\ell}.
\]
\end{lemma}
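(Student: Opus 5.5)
The plan is to prove the bound by induction on $q$, in the hereditary form "every $q$-local edge-coloring of a complete graph on $n$ vertices satisfying the neighbourhood hypothesis has $n \le f(q)$", where $f(q) := \chi^q (q!)^{1/\l}$ and $f(0)=1$. The base case $q=0$ is trivial, since then the graph has no edges and so $n \le 1$. The hypothesis is inherited by induced subgraphs: distances in a colour only grow when we pass to an induced subgraph, so the balls shrink, and induced subgraphs of $\chi$-colourable graphs are $\chi$-colourable. Since $f(q)/f(q-1) = \chi\, q^{1/\l}$, the inductive step amounts to showing $n \le \chi\, q^{1/\l} f(q-1)$. Comparing with the bound $\chi^q q^{q/\l}$ of Axenovich et al., the gain should come from using a fresh factor $q^{1/\l}$ at each level, where they use the same factor $q^{1/\l}$ at every level.

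\emph{Step 1 (balls are small).} Fix a vertex $v$ and a colour $c$ at $v$. Properly colour $B_c := N_c^{\le \l}(v)$ in colour $c$ with at most $\chi$ classes. Each class $I$ spans no colour-$c$ edge. Every vertex of $B_c$ is incident to colour $c$: the vertex $v$ because $c$ is a colour at $v$, and every other vertex because it lies on a colour-$c$ path from $v$. Hence $G[I]$ is a $(q-1)$-local colouring still satisfying the hypothesis, and the induction gives $|I| \le f(q-1)$. Applying the same argument to the interior balls gives
\[
|N_c^{\le i}(v)| \le \chi f(q-1) \quad \text{for all } i \le \l .
\]

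\emph{Step 2 (passing from the balls to all of $V(G)$).} This step has to use the layers $N_c^i(v)$, $1 \le i \le \l$, together with the fact that $G$ is complete. Every vertex $u \ne v$ lies in $N^1_c(v)$ for the colour $c$ of the edge $uv$. The same holds from the point of view of any other centre, so a colour-$c$ sphere $N^i_c(v)$ is joined to the rest of the graph by edges whose colours lie in the palettes of its vertices.

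The approach I would try first is the weighting idea of Axenovich et al. Assign to every vertex $u$ and every colour $c$ at $u$ the weight
\[
\frac{|N_c^{1}(u)|}{|N_c^{\le \l}(u)|},
\]
and use the telescoping inequality along the layers: the product of the $\l$ growth ratios $|N_c^{\le i}(u)|/|N_c^{\le i-1}(u)|$ equals $|N_c^{\le \l}(u)| \le \chi f(q-1)$. Combined with AM--GM, this shows that some layer ratio is at most $(\chi f(q-1))^{1/\l}$ relative to its predecessor.

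Summing over the $q$ colours at a well-chosen vertex (for instance one minimizing the relevant weight) should give $n \le \chi f(q-1)\cdot q^{1/\l}$, rather than their cruder $q\cdot(\cdots)$ estimate. The $q^{1/\l}$ is meant to come from averaging the telescoping product over the $q$ colours and taking an $\l$-th root.

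\emph{Main obstacle.} The crux is making Step 2 produce exactly the factor $q^{1/\l}$ at level $q$, with the correct $f(q-1)$ rather than $f(q)$ appearing inside the root. That is precisely where the improvement from $q^{q/\l}$ to $(q!)^{1/\l}$ has to be earned. My plan is to keep the bookkeeping of Axenovich et al.'s weighting verbatim but to feed in the inductive bound $|I| \le f(q-1)$ at level $q$, instead of a uniform bound in $q$. I expect the telescoping then to multiply out to $\prod_{j\le q} \chi j^{1/\l}$.
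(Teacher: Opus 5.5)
Your Step~1 is correct, but Step~2 contains the entire content of the lemma, and you do not carry it out. The route you sketch for it cannot work. Your telescoping, your weights $|N_c^1(u)|/|N_c^{\le\ell}(u)|$, and your summation over colours at a vertex all use only the sizes of colour balls. From the information $|N_c^{\le\ell}(u)|\le B$ for all $u,c$, nothing better than the trivial $n\le 1+q(B-1)$ can follow; this bound comes from covering $V(G)\setminus\{v\}$ by the $q$ first layers at $v$. To see that it cannot be improved, colour the edges of $K_{m^2}$ by the $m+1$ parallel classes of an affine plane of prime-power order $m$. This is an $(m+1)$-local colouring in which every ball $N_c^{\le\ell}(u)$ is a line of size $B=m$, and $n=m^2=1+q(B-1)$ exactly. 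Yet $n=m^2>m(m+1)^{1/\ell}=Bq^{1/\ell}$ for $\ell\ge 2$. So feeding the uniform bound $\chi f(q-1)$ into such an argument only gives $f(q)\approx q\chi f(q-1)$, a bound of order $\chi^q q!$. Your telescoping also starts at the wrong radius. Running from $N_c^{\le 0}(u)=\{u\}$ to $N_c^{\le\ell}(u)$ yields a layer ratio of $(\chi f(q-1))^{1/\ell}$, which has nothing to do with $q^{1/\ell}$. More fundamentally, a uniform induction on $q$ cannot be closed. After any deletion around $v$, the vertices outside the deleted region may still carry $q$ colours, so the reduced graph is not covered by the level-$(q-1)$ hypothesis.

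The missing idea is a colour-degree-dependent weight, combined with induction on $n$ (not on $q$) and vertex deletion, as in the paper. Put $w(x)=\chi^{-\coldeg(x)}(\coldeg(x)!)^{-1/\ell}$ and prove $w(V(G))\le 1$ by induction on $n$; this gives $n\le\chi^q(q!)^{1/\ell}$. A vertex that loses a colour has its weight multiplied by at least $\chi\,\coldeg(x)^{1/\ell}$. If $v$ is chosen of minimum colour-degree $d$, this factor is at least $\chi d^{1/\ell}$ for every vertex. For $d\ge 2$, pigeonholing over the $d$ colours at $v$ gives a colour $c$ with $w(N_c^{\le 1}(v))>w(V(G))/d$. (If $d=1$, then $N_c^{\le 1}(v)=V(G)$ and one argues directly.) Since $w(N_c^{\le\ell+1}(v))\le w(V(G))$, the weighted growth from radius $1$ to radius $\ell+1$ is less than $d$. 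Hence some $i\in[\ell]$ has $w(N_c^{i+1}(v))\le(d^{1/\ell}-1)\,w(N_c^{\le i}(v))$. Now set $S=N_c^{\le i}(v)$ and $T=N_c^{i+1}(v)$, and let $S'$ be a heaviest colour-$c$ independent class of $S$. Delete $T$ and all of $S$ except $S'$; then every vertex of $S'$ loses colour $c$. The gain $\chi d^{1/\ell}w(S')\ge d^{1/\ell}w(S)$ pays for the loss $w(S)+w(T)\le d^{1/\ell}w(S)$, so the total weight does not decrease. The factor $(q!)^{1/\ell}$ arises because losing the $k$-th colour gains $k^{1/\ell}$ in weight, not from a level-by-level recursion in $q$.
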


\begin{proof}
Fix a $q$-local edge-coloring of $G = K_n$.
Following Axenovich et al.~\cite{ACJMR25}, for each vertex $x \in V(G)$, let $\coldeg(x)$ be the number of colors incident to $x$, and define the weight of $x$ by
\[
 w(x)\coloneqq \frac{1}{\chi^{\coldeg(x)}\, (\coldeg(x)!)^{1/\ell}}.
\]
For a set $U \subseteq V(G)$ of vertices, define the weight of $U$ by $w(U)\coloneqq \sum_{x\in U} w(x)$.
Since $\coldeg(x) \leq q$ for every $x$, we have $w(x) \geq \chi^{-q}(q!)^{-1/\ell}$,
and so it suffices to show that $w(V(G)) \leq 1;$ indeed, we would then have $n \cdot \chi^{-q}(q!)^{-1/\ell}\le w(V(G)) \leq 1$, which rearranges to the desired inequality.

We prove $w(V(G)) \leq 1$ by induction on $n = |V(G)|$, where the case $n=1$ holds trivially.
Assume now that $n \geq 2$ and let $v \in V(G)$ be a vertex of minimum color-degree.
Write $d \coloneqq  \coldeg(v)$. The proof now splits into two cases.

\noindent\textit{Case 1: $d=1$.}
Let $c$ be the unique color incident to $v$. 
Then every edge from $v$ has color $c$, and so $N_c^{\leq 1}(v) = V(G)$. By hypothesis, the color-$c$ subgraph induced by $V(G)$ has chromatic number at most $\chi$.
Fix a proper coloring of $V(G)$ (with respect to the color-$c$ subgraph) with $\chi$ colors, and choose a color class $S \subseteq V(G)$ of maximum weight. Then
$S$ spans no edge of color $c$, which implies $|S| < n$ since $v \notin S$, and has $w(S) \geq \frac{w(V(G))}{\chi}$. Now delete all vertices outside $S$. 
Every surviving vertex $x \in S$ loses the color $c$, so each such vertex has its weight multiplied by a factor of at least
\[
 \chi\left(\frac{\coldeg(x)!}{(\coldeg(x)-1)!}\right)^{1/\ell}
 =\chi\,\coldeg(x)^{1/\ell}\ge \chi.
\]
Hence the total weight of the remaining graph is at least $\chi\,w(S)\ge w(V(G))$. By the inductive hypothesis, since $|S| < n$, the new graph has total weight at most $1$, and therefore so does $G$.

\noindent\textit{Case 2: $d \geq 2$.}
Since exactly $d$ colors are incident to $v$, there exists a color $c$ such that $w(N_c(v)) \geq w(V(G)\setminus\{v\})/d$. Therefore,
\[
 w(N_c^{\leq 1}(v)) = w(v)+w(N_c(v))
 \ge w(v)+\frac{w(V(G))-w(v)}{d}
 > \frac{w(V(G))}{d}.
\]
We claim that there exists some $i\in [\l]$ with
\[
 w(N_c^{i+1}(v)) \leq (d^{1/\ell}-1)\,w(N_c^{\leq i}(v)).
\]
Indeed, if this failed for every $i \in [\l]$, then for each such $i$ we would have
\[
 w(N_c^{\leq i+1}(v))
 = w(N_c^{\leq i}(v))+w(N_c^{i+1}(v))
 > d^{1/\ell}w(N_c^{\leq i}(v)).
\]
Iterating this inequality gives
\[
 w(N_c^{\leq \ell+1}(v))
 >(d^{1/\ell})^{\ell}w(N_c^{\leq 1}(v))
 = d\cdot w(N_c^{\le 1}(v))
 > w(V(G)),
\]
which is impossible.

Now fix such an index $i$ and set $S \coloneqq  N_c^{\le i}(v)$ and $T\coloneqq N_c^{i+1}(v)$. Then $w(T) \leq (d^{1/\ell}-1) w(S)$. By hypothesis, the color-$c$ subgraph induced by $S$ has chromatic number at most $\chi$. Fix a proper $\chi$-coloring of that subgraph with w.l.o.g.~at least two color classes and let $S' \subseteq S$ be a color class of
maximum weight. Then $S'$ spans no edge of color $c$, $|S'| <|S|$, and $w(S')\ge w(S)/\chi$.
Now delete all vertices of $T\cup (S\setminus S')$.

We claim that every vertex of $S'$ loses the color $c$ entirely. 
Indeed, let $x\in S'$ and let
$y$ be a color-$c$ neighbor of $x$ in the original graph. 
Since $x \in N_c^{\leq i}(v)$, any such
$y$ lies in $N_c^{\leq i+1}(v) = S \cup T$. 
After the deletion, all vertices of $T$ are gone, all
vertices of $S \setminus S'$ are gone, and the remaining vertices of $S'$ span no color-$c$ edge.
So color $c$ is no longer incident to $x$ in the new graph.

Hence, every surviving vertex $x \in S'$ has its weight multiplied by
\[
 \chi\left(\frac{\coldeg(x)!}{(\coldeg(x)-1)!}\right)^{1/\l}
 =\chi \cdot \coldeg(x)^{1/\l}
 \geq \chi \cdot d^{1/\l},
\]
where the inequality $\coldeg(x)\ge d$ holds by the minimality of $d$. 
Every surviving vertex outside $S$ keeps the
same color-degree or loses colors, so its weight does not decrease. 
Therefore the new total
weight $W_{\mathrm{new}}$ satisfies
\[
 W_{\mathrm{new}}
 \geq w(V(G)) - w(S) - w(T) + \chi \cdot d^{1/\l} w(S').
\]
Using the two inequalities above we obtain
\[
 W_{\mathrm{new}}
 \ge w(V(G)) - w(S) - (d^{1/\ell}-1)w(S) + \chi \cdot d^{1/\ell} \frac{w(S)}{\chi}
 = w(V(G)),
\]
and so our deletion of vertices did not decrease the total weight. 
By the inductive hypothesis,
$W_{\mathrm{new}} \leq 1$, and so $w(V(G))\leq 1$ as well.
\end{proof}

\begin{remark}
    \label{rem: Ramsey-odd-cycle-improv}
    This sharpened version of~\cite[Lemma 2.1]{ACJMR25}
    immediately implies a lower-order improvement to the multicolor Ramsey numbers of odd cycles, directly following Axenovich et al.~\cite{ACJMR25}.
    In particular, writing $r(C_{2\ell+1}; q)$ for the $q$-color Ramsey number of $C_{2\ell+1}$, Lemma~\ref{lem: main} implies the bound
    \[
    r(C_{2\ell +1}; q) \leq (4\ell -2)^q (q!)^{1/\ell} + 1,
    \]
    which improves over the bound in~\cite[Theorem 1.1]{ACJMR25} by a factor of roughly $e^{q/\ell}$.
\end{remark}


\section{Proofs of Theorem~\ref{thm: main-balanced} and Corollary~\ref{cor: general-balanced}}

\label{sect: proof-main-balanced}

In proving Theorem~\ref{thm: main-balanced} and Corollary~\ref{cor: general-balanced}, we will use the following padding lemma several times.

\begin{lemma}[{\cite[Lemma 5]{K25}}]
\label{lem: padding}
Suppose that a set $A\subseteq \N$ contains a solution to
\begin{equation}
\label{eq: padding-1}
    x_1+\dots+x_u=y_1+\dots+y_v.
\end{equation}
Then, for all  integers $t \geq 1$ and $w \geq 0$, the same set $A$ contains a solution to
\begin{equation}
    \label{eq: padding-2}x_1+\dots+x_{ut+w}=y_1+\dots+y_{vt+w}.
\end{equation}
\end{lemma}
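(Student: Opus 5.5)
The padding lemma (Lemma~\ref{lem: padding}) admits a direct constructive proof: we build the longer solution explicitly from the given one, using only elements that already lie in $A$. Fix a solution $x_1,\dots,x_u,y_1,\dots,y_v\in A$ of \eqref{eq: padding-1}. The plan has two independent steps. We first handle the multiplication by $t$, and then the addition of $w$ further terms on each side.

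\emph{Step 1 (scaling by $t$).} We take each $x_i$ exactly $t$ times and each $y_j$ exactly $t$ times. Summing the original identity $t$ times gives
\[
t(x_1+\dots+x_u)=t(y_1+\dots+y_v).
\]
The left side is then a sum of $ut$ elements of $A$ and the right side a sum of $vt$ elements of $A$. Since the variables in \eqref{eq: padding-2} need not be distinct, repetition is allowed. This gives a solution of the equation with $ut$ terms on the left and $vt$ terms on the right.

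\emph{Step 2 (padding by $w$).} Choose any element $z\in A$; for instance $z=x_1$ works, and $A$ is nonempty because it contains a solution. Add $z$ to both sides $w$ times. Both sides increase by $wz$, so equality is preserved. The left side now has $ut+w$ terms and the right side $vt+w$ terms, all lying in $A$. The case $w=0$ is trivially included.

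There is no real obstacle here. The only point to check is the convention that solutions may repeat values, which is the convention used throughout the paper (e.g.\ in Schur's theorem, where $x,y,z$ need not be distinct). If distinctness were required, the statement would need a different argument, but that is not the setting here.
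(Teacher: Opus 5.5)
Your proposal is correct and matches the paper's proof: repeat each $x_i$ and each $y_j$ exactly $t$ times, then add the same element (the paper also uses $x_1$) $w$ times to both sides.
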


\begin{proof}
Fix a solution $x_1',\dots,x_u',y_1',\dots,y_v'\in A $
to equation~\eqref{eq: padding-1}. Repeat each $x_i'$ exactly $t$ times on the left, repeat each $y_j'$ exactly
$t$ times on the right, and then add the same element, say $x_1'$, exactly $w$ times to both
sides. 
The resulting equality is a solution to equation~\eqref{eq: padding-2}.
\end{proof}

\noindent We now prove Theorem~\ref{thm: main-balanced} by applying Lemma~\ref{lem: main} to equation \eqref{eq: longer-schur}.

\begin{proof}[Proof of Theorem~\ref{thm: main-balanced}]
    Fix an $r$-coloring of $[N]$ with no monochromatic solution to equation \eqref{eq: longer-schur}. Let $A_1,\dots,A_r \subseteq [N]$ be the color classes. 
    We define an auxiliary coloring $\Delta$ of the edges of the complete graph on
    vertex set $[N]$ by setting $\Delta(uv) = j$ if $|u-v| \in A_j$.
    Note that, since this is an $r$-coloring of $E(K_{[N]})$, it is trivially an $r$-local edge-coloring.

    We will verify the hypothesis of Lemma~\ref{lem: main} with $q=r$, $\ell=m$, $\chi=2m+1$. 
    To that end, fix a vertex $a \in [N]$ and a color $j \in [r]$.
    For each integer $t \in \{-m,-m+1,\dots, m\}$, define $B_t(a)$ to be the set of all
    $x \in [N]$ for which there exist integers $p,q\ge 0$ and elements $u_1,\dots,u_p,v_1,\dots,v_q\in A_j$
    such that
    \[
        p+q\le m,\qquad p-q=t, \qquad  x=a+u_1+\cdots+u_p-v_1-\cdots-v_q.
    \]
    We call $t$ the {\it charge} of such a representation.

    \begin{claim*}
        The sets $B_t(a)$ cover $N_j^{\leq m}(a)$.
    \end{claim*}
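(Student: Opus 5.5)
The plan is to prove, by induction on $i \in \{0,1,\dots,m\}$, the stronger statement that every $x \in N_j^{\leq i}(a)$ admits a representation
\[
x = a + u_1 + \cdots + u_p - v_1 - \cdots - v_q
\]
with $u_k, v_k \in A_j$, $p,q \geq 0$ and $p+q \leq i$. Such a representation has charge $t = p-q$, and $|t| \leq p+q \leq m$. So $x$ lies in $B_t(a)$ for some $t \in \{-m,\dots,m\}$, which is exactly the claim.

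The base case $i=0$ is immediate. Here $N_j^{0}(a)=\{a\}$, and we take $p=q=0$ with the empty sums, so $a \in B_0(a)$.

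For the inductive step, let $x \in N_j^{i+1}(a)$ with $i+1 \leq m$.
\begin{itemize}
\item By definition of graph distance in the color-$j$ subgraph, $x$ has a color-$j$ neighbor $y \in N_j^{i}(a)$.
\item By induction, $y$ has a representation of the required form with $p+q \leq i$.
\item Since $\Delta(xy)=j$, we have $|x-y| \in A_j$. Set $s \coloneqq |x-y|$.
\item If $x>y$, then $x = y + s$, and we append $s$ as a new $u$-term, so $p$ increases by one.
\item If $x<y$, then $x = y - s$, and we append $s$ as a new $v$-term, so $q$ increases by one.
\end{itemize}
In either case $p+q \leq i+1 \leq m$. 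Also $x \in [N]$ because $x$ is a vertex of the graph. Hence $x \in B_{p-q}(a)$, which completes the induction.

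I do not expect a real obstacle here; the claim is essentially bookkeeping. The only point needing care is the membership condition in the definition of $B_t(a)$: it requires just $x \in [N]$ and places no condition on intermediate partial sums. We should also not attempt to cancel $u$ against $v$ terms, since the definition allows $p+q<m$ and no reduction is needed. The substantive work, showing that each $B_t(a)$ is independent in color $j$ so that $\chi = 2m+1$ suffices, comes after the claim and is not needed here.
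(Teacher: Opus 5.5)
Your proof is correct and is essentially the paper's argument. The paper takes a color-$j$ path of length at most $m$ from $a$ to $x$ and sums the signed contributions $\pm u$, with $u \in A_j$, along its edges; your induction on the distance $i$ formalizes exactly that, one edge at a time.
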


    \begin{proof}
        If $x \in N_j^{\leq m}(a)$, then by definition there exists a $j$-colored path (under $\Delta$) from $a$ to $x$ of length at most $m$. 
        Traversing this path from $a$ to $x$, each edge contributes either $+u$ or $-u$ for
        some $u\in A_j$ according to whether the next vertex is larger or smaller than the previous one. 
        Summing over these contributions along the path yields exactly a representation of $x-a$ of the required form. Hence,
        $x \in B_t(a)$ for some $t\in\{-m,\dots,m\}$.
    \end{proof}

    \begin{claim*}
        Each set $B_t(a)$ is independent in the color-$j$ subgraph.
    \end{claim*}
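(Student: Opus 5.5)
Suppose for contradiction that $x, y \in B_t(a)$ with $x<y$ are joined by an edge of color $j$, i.e.\ $z \coloneqq y - x \in A_j$. Fix representations
\[
x = a + u_1+\dots+u_p - v_1-\dots-v_q, \qquad y = a + u'_1+\dots+u'_{p'} - v'_1-\dots-v'_{q'},
\]
with all $u_i,v_i,u'_i,v'_i \in A_j$, $p+q\le m$, $p'+q'\le m$ and $p-q = p'-q' = t$. Substituting into $y = x + z$ and cancelling $a$ gives
\[
u'_1+\dots+u'_{p'} - v'_1-\dots-v'_{q'} = u_1+\dots+u_p - v_1-\dots-v_q + z .
\]
Now move every negative term to the other side:
\[
u'_1+\dots+u'_{p'} + v_1+\dots+v_q \;=\; z + u_1+\dots+u_p + v'_1+\dots+v'_{q'} .
\]
Every term here lies in $A_j$. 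The left side has $p'+q$ terms and the right side has $1+p+q'$ terms.

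The key step is the charge bookkeeping. Since $p'-q' = p-q$, we have
\[
(1+p+q') - (p'+q) = 1 + (p-q) - (p'-q') = 1 ,
\]
so the right side has exactly one more term than the left. Set $s \coloneqq p'+q$. Then the equation is a monochromatic solution of $x_1+\dots+x_{s+1} = y_1+\dots+y_s$. We also have $p'+q \le m$, because adding the constraints gives $(p+q)+(p'+q')\le 2m$ and $p'+q = p+q'$ (from $p-q=p'-q'$), so $2(p'+q)\le 2m$.

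If $s\ge 1$, Lemma~\ref{lem: padding} applies with $u=s+1$, $v=s$, $t=1$ and $w=m-s\ge 0$. It turns this into a solution in $A_j$ of $x_1+\dots+x_{m+1} = y_1+\dots+y_m$, contradicting the assumption that the coloring has no monochromatic solution to \eqref{eq: longer-schur}.

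The main obstacle is the degenerate case $s=0$, which forces $p'=q=0$, hence $p=q'=0$ and $t=0$. The equation then reads $0 = z$, impossible since $z\in A_j\subseteq[N]$ is positive. Equivalently, $x=y=a$, which contradicts $x<y$. So no color-$j$ edge lies inside $B_t(a)$, and each $B_t(a)$ is independent.

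With both claims in hand, $N_j^{\le m}(a)$ is covered by the $2m+1$ independent sets $B_{-m},\dots,B_m$. Hence the hypothesis of Lemma~\ref{lem: main} holds with $\chi = 2m+1$, $q=r$ and $\ell=m$, and the lemma gives the theorem.
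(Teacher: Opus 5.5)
Your proof is correct and follows essentially the same route as the paper: substitute the two charge-$t$ representations into the color-$j$ difference, use $p-q=p'-q'$ to see that the sides have $s+1$ and $s$ terms with $s\le m$, and then pad via Lemma~\ref{lem: padding} with $t=1$, $w=m-s$. Your explicit treatment of the degenerate case $s=0$, which forces $x=y=a$, covers a point the paper leaves implicit.
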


    \begin{proof}
        Fix $t$, and suppose that $x,y \in B_t(a)$ have $\Delta(xy) = j$.
        We may assume that $x>y$ so that $x-y\in A_j$.
        As $x,y \in B_t(a)$, we may choose representations of charge $t$
        \begin{align*}
            x &= a+u_1+\cdots+u_{p_1}-v_1-\cdots-v_{q_1}, \\
            y &= a+u_1'+\cdots+u_{p_2}'-v_1'-\cdots-v_{q_2}'
        \end{align*}
        with each $u_i, u_i', v_i, v_i'$ lying in $A_j$, $p_1+q_1\le m, p_2+q_2\le m$, and $p_1-q_1=p_2-q_2=t$. 
        Setting $s\coloneqq p_1+q_2=q_1+p_2$, 
        note that $2s=(p_1+q_1)+(p_2+q_2)\le 2m,$
        and so $s\le m$.

        Using our representations of $x$ and $y$, we may write $x-y \in A_j$ as
        \[
            x-y=(u_1+\cdots+u_{p_1})+(v_1'+\cdots+v_{q_2}')-(v_1+\cdots+v_{q_1})-(u_1'+\cdots+u_{p_2}'),
        \]
        and because all of the $u_i, u_i', v_i, v_i'$ are in $A_j$ as well, we obtain a monochromatic solution in color $j$ to
        \[
            z_1+\cdots+z_{s+1}=w_1+\cdots+w_s.
        \]
        By Lemma~\ref{lem: padding}, this yields a monochromatic solution in color $j$ to equation \eqref{eq: longer-schur}
        which did not occur in the original coloring of $[N]$.
    \end{proof}

    Since the $2m+1$ sets $B_{-m}(a),\dots,B_m(a)$ cover $N_j^{\leq m}(a)$ and each is independent,
    we conclude that the color-$j$ subgraph induced by $N_j^{\le m}(a)$ has chromatic number at
    most $2m+1$.
    Thus, Lemma~\ref{lem: main} applies and gives
    \[
        N \leq (2m+1)^r (r!)^{1/m},
    \]
    as promised.
\end{proof}

To conclude the section, we now prove Corollary~\ref{cor: general-balanced}.

\begin{proof}[Proof of Corollary~\ref{cor: general-balanced}]
    If a color class contains a solution to equation~\eqref{eq: longer-schur}
    then by Lemma~\ref{lem: padding} with parameters $t=d$ and the same $w$ it also contains a solution to
    \[
        x_1+\cdots+x_{d(m+1)+w}=y_1+\cdots+y_{dm+w},
    \]
    which is exactly equation~\eqref{eq: schur-a-b}.
    Therefore any coloring avoiding the latter equation also avoids the former one, and
    Theorem~\ref{thm: main-balanced} applies.
\end{proof}

\section{Proof of Theorem~\ref{thm:arbitrary_m}}
\label{sect: proof-arbitrary_m}

In this section, we will consider colorings of $[N]$ with no monochromatic solutions to equation \eqref{eq: longer-schur} for \emph{any} $m \in \N$. For the lower bound, i.e. that there exists an $r$-coloring of the first $N = 2^{r} - 1$ positive integers without any monochromatic solutions, we observe that the number of variables of the left and right hand side have different parity. Hence, we may consider the $r$-coloring of $[N]$ defined by $n\longmapsto \nu_2(n)+1$, where $\nu_2(n)$ is the $2$-adic valuation of $n$ (i.e., $\nu_2(n)=0$ if $n$ is odd, and $\nu_2(n)=1+ \nu_2(n/2)$ if $n$ is even). If $x_1,\dots,x_{m+1},y_1,\dots,y_m$ are of the same color, then
    \[\nu_2(x_1) = \dots =\nu_2(x_{m+1}) = \nu_2(y_1)=\dots=\nu_2(y_m)\]
    and thus, as $x_i / 2^{\nu_2(x_i)}, y_i / 2^{\nu_2(y_i)}$ are odd,
    \[\nu_2(x_1+\dots+x_{m+1}) \neq \nu_2(y_1+\dots+y_m).\]
    In particular, they can't form a solution to equation \eqref{eq: longer-schur}.
    
    In order to prove the other direction of Theorem~\ref{thm:arbitrary_m}, we resort to the following result which was first conjectured by Erd\H{o}s~\cite{Erdos62} and later proven by
    Crittenden and Vanden Eynden~\cite{C-VE69,C-VE70}. A short proof of a generalization of this result was later found by Balister, Bollob\'as, Morris, Sahasrabudhe, and Tiba~\cite{BBMST}.

    \begin{theorem}[Crittenden and Vanden Eynden~\cite{C-VE69,C-VE70}]\label{thm:AP_covers}
        Let $\mathcal{A} = \{A_1,A_2,\dots,A_k\}$ be a collection of $k$ arithmetic progressions. If $\mathcal{A}$ covers all integer numbers from $1$ to $2^k$, then it covers $\Z$.
    \end{theorem}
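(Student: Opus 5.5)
The statement is a classical result that the paper quotes without proof. If I had to reprove it, I would prove the contrapositive in a translation-invariant form: \emph{if $k$ arithmetic progressions $A_i=a_i+d_i\Z$ do not cover $\Z$, then every block of $2^k$ consecutive integers contains an uncovered integer.} Applying this to the block $\{1,\dots,2^k\}$ gives the theorem. Progressions with $d_i=0$ are single points, and I would treat them as progressions with a huge modulus. I would argue by strong induction on $k$, with $k=0$ trivial. The uncovered set $U$ is periodic with period $L=\mathrm{lcm}(d_i)$, so the claim is a statement about a finite cyclic configuration, and all reductions below preserve that structure.

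\textbf{Step 1: modulus $2$.} Suppose some modulus $d_i$ is even. Split $\Z$ into the classes of $0$ and $1$ mod $2$, and rescale each class via $n\mapsto \lfloor n/2\rfloor$. Each progression with odd modulus meets both classes and becomes a progression in each rescaled copy. Each progression with even modulus lies entirely in one class. Since $U\neq\emptyset$, some class $\rho$ contains an uncovered point. If that class is avoided by at least one progression with even modulus, then after rescaling it is left uncovered by at most $k-1$ progressions, and the induction hypothesis applies. A block of $2^k$ consecutive integers contains $2^{k-1}$ consecutive elements of class $\rho$, that is, $2^{k-1}$ consecutive rescaled integers. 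So we find an uncovered point in the block.

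\textbf{Step 2: the bad cases.} The remaining cases are that all moduli are odd, or that every progression with even modulus lies in the class that contains uncovered points. In the second case, the other parity class is covered only by the progressions with odd modulus. I would handle these cases by passing to an odd prime $p\mid d_i$ and splitting into the $p$ classes mod $p$. A block of length $2^k$ then only contains about $2^k/p$ consecutive elements of a class, so I need to discard about $\log_2 p$ progressions rather than one. I would try to arrange this by choosing the residue class $\rho$ mod $p$ that contains uncovered points and is hit by the \emph{fewest} progressions with modulus divisible by $p$. By pigeonhole this drops a $(1-1/p)$-fraction of those progressions. Combining this with a choice of $p$ maximizing the number of moduli it divides should give the required saving.

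\textbf{Main obstacle.} The hardest step is the bookkeeping in Step 2. The class minimizing the number of divisible progressions need not be a class containing uncovered points, and when few moduli share a prime the pigeonhole saving is too weak. If the direct induction fails, I would instead follow the weighting approach of Balister, Bollob\'as, Morris, Sahasrabudhe, and Tiba. In that approach one assigns to each integer in a window a weight that halves each time a progression is processed, and shows the total uncovered weight stays positive. This is the analogue of the $2^{-\coldeg}$-type weights in the proof of Lemma~\ref{lem: main}, and it replaces the combinatorial case split with a single averaging inequality.
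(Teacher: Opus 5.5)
The paper does not prove this statement; it quotes Crittenden and Vanden Eynden and points to the short proof of Balister et al. Your proposal does not supply a proof either. Step~1 is correct, but the remaining case is never handled. The closing appeal to a ``halving weight'' scheme states no inequality, so it is a pointer, not an argument.

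Your second bad case is in fact vacuous. Suppose some modulus is even, only the parity class $\rho$ contains uncovered points, and every even-modulus progression lies in $\rho$. Then the odd-modulus progressions alone cover the other parity class. Their union is a union of residue classes modulo the odd number $M$ (the lcm of the odd moduli). Every residue class modulo $M$ meets both parity classes, so this union is all of $\Z$, contradicting $U\neq\emptyset$.

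So everything hinges on the case where all moduli are odd, and there your plan of restricting to one residue class modulo an odd prime $p$ cannot work. Take $k=2$, $A_1=5\Z$, $A_2=7\Z$. Each prime divides at most one modulus, so passing to a class mod $p$ discards at most one progression. The induction would then need $2^{k-1}=2$ consecutive elements of that class in every window of length $4$. But such a window contains at most one element of each class mod $5$ or mod $7$, and may miss the class you fixed entirely. In general, a single class mod $p$ forces you to discard at least $\lceil\log_2 p\rceil$ progressions, which no pigeonhole choice guarantees. For $p>2^k$ the window can miss the class altogether. A working argument has to combine information from all residue classes at once.

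Here is a short argument that does this with no case analysis. Write $A_j=a_j+d_j\Z$, put $\omega_j=e^{2\pi\sqrt{-1}/d_j}$, and set
\[
g(x)=\prod_{j=1}^{k}\bigl(1-\omega_j^{\,x-a_j}\bigr),\qquad x\in\Z,
\]
so that $g(x)=0$ exactly when $x$ is covered. Expanding,
\[
g(x)=\sum_{S\subseteq[k]}(-1)^{|S|}\Bigl(\prod_{j\in S}\omega_j^{-a_j}\Bigr)\Bigl(\prod_{j\in S}\omega_j\Bigr)^{x}=\sum_{t=1}^{M}C_t\,\eta_t^{\,x},
\]
where, after collecting equal bases, the $\eta_t$ are distinct roots of unity and $M\le 2^k$. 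Suppose $g$ vanishes at $1,\dots,2^k$. Then $(C_t\eta_t)_t$ lies in the kernel of the invertible Vandermonde matrix $(\eta_t^{\,s})_{0\le s<M,\,1\le t\le M}$. So every $C_t=0$, hence $g\equiv 0$ on $\Z$, and every integer is covered. For $A_j=2^{j-1}+2^j\Z$ the $2^k$ bases are distinct, and $g(x)$ equals $2^k$ if $2^k\mid x$ and $0$ otherwise; this is why $2^k$ cannot be lowered.
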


\begin{proof}[Proof of Theorem~\ref{thm:arbitrary_m}]
    By the coloring provided above, it suffices to show that any coloring\\
    $\psi \colon~[2^r]~\longrightarrow~[r]$ admits a monochromatic solution to equation \eqref{eq: longer-schur} for some $m \in \N$. For the sake of contradiction, assume otherwise and let $\psi$ be a counterexample.
    \begin{claim*}\label{claim:colors_invariant_mod_d}
        For every $s\in [r]$, there exist integers $d_s>1$ and $0<r_s<d_s$ such that $\psi^{-1}(s) \subset d_s\Z+r_s$. 
    \end{claim*}
    In other  words, Claim \ref{claim:colors_invariant_mod_d} says that every color class $\psi^{-1}(s)$ must be fully contained in some nonzero residue class modulo $d_{s}$, for some integer $d_s > 1$.  
    \begin{proof}
        Fix $s\in [r]$ and let $A = \{a_1,a_2,\dots,a_t\} =  \psi^{-1}(s)$. If $|A|\le 1$, the result follows trivially, and so we may assume $|A|\ge 2$. 
        
        For a set of integers $S$, we shall write $\gcd(S)$ for the greatest common divisor of all the elements from $S$. Using this notation, let $d = \gcd(A)$ and let $d' = \gcd(A-A)/d$. It suffices to show that $d'>1$. Indeed, notice that if we set $d_s = d'd$ and $d' >1$, then we can let $r_s$ be the remainder of an arbitrary (fixed) element of $A$ in the division by $d_s$. Since $d_s$ divides all differences in $A$, we have that $A\subset d_s \Z + r_s$. Furthermore, since $d<d_s$, we also have that $r_s\ne 0$.

         So, let us suppose that $d' = 1$ and seek a contradiction. The idea is to consider the set $B$ of integers $b_i = a_i/d$ for every $i \in [t]$. Note that a solution in $B$ to the equation \[x_1+\dots+x_{m+1} = y_1 + \dots + y_m\]
        would give a monochromatic solution of the same equation in $\psi$ by scaling back the solution by a factor of $d$. Hence, such solutions cannot exist.

      On the other hand, since $1= d' = \gcd(B-B)$, B\'ezout's theorem implies that there exist integers 
      $(c_{ij})_{(i,j)\in[t]^2}$ 
      such that
      $1 = \sum_{(i,j)\in[t]^2}c_{ij}(b_i-b_j)$
      Now let 
        \[k_i = \sum_{j=1}^t (c_{ji} - c_{ij})b_j\]
        and notice that
        \begin{equation}\label{eq:orthogonal_vector}
            \sum_{i=1}^t k_ib_i = \sum_{(i,j) \in [t]^2}  (c_{ji} - c_{ij})b_jb_i= 0
        \end{equation}
        and
        \begin{equation}\label{eq:sum_k}
            \sum_{i=1}^t k_i = \sum_{(i,j)\in[t]^2} (c_{ji} - c_{ij})b_j = \sum_{(i,j)\in[t]^2} c_{ij}b_i - \sum_{(i,j)\in[t]^2} c_{ij}b_j = 1.
        \end{equation}
    Now let $K^+ = \{i\in [t]\colon k_i \ge 0\}$ and $K^- = \{i\in [t]\colon k_i <0\}$. Then we choose $m = -\sum_{i\in K^-} k_i$ and notice that \eqref{eq:sum_k} gives $\sum_{i\in K^+} k_i= m+1$.
    We can now construct a solution in $B$ to the equation $x_1+\dots+x_{m+1} = y_1 + \dots + y_m$ as follows. 
    For each $i\in K^+$, let $b_i$ be in $(x_i)_{i\in[m+1]}$ with multiplicity $k_i$ and for each $i \in K^-$, let $b_i$ be in $(y_i)_{i\in[m]}$ with multiplicity $-k_i$. 
    Then 
    \[x_1+\dots+x_{m+1} - (y_1 + \dots + y_m) = \sum_{i \in K^+} k_i b_i - \sum_{i \in K^-} (-k_i b_i) = \sum_{i=1}^t k_i b_i = 0,\]
    where the last equality follows from \eqref{eq:orthogonal_vector}. This contradiction shows the assumption $d' = 1$ is false.
    \end{proof}
    Finally, notice that Claim~\ref{claim:colors_invariant_mod_d} implies that the family of arithmetic progressions \[\mathcal{A} = \{d_s\Z +r_s\colon s\in[r]\}\]
    covers $[2^r]$. Theorem~\ref{thm:AP_covers} would then imply that $\mathcal{A}$ covers $\Z$, but $0\not\in d_s\Z +r_s$ for all $s\in[r]$, which is a contradiction.
\end{proof}

\section{Quantitative aspects}
    Building on the aforementioned fact that every $r$-edge-coloring of the complete graph on $2^r+1$ vertices contains a monochromatic odd cycle, in 1975 Erd\H{o}s and Graham~\cite{EG75} asked for the shortest length of such a cycle which could be guaranteed in every such coloring.
    This problem has recently seen some exciting progress, for which we refer the interested reader to the papers~\cite{ACJMR25, GH24, JY25}. 
    In this spirit, our proofs of Theorems~\ref{thm: main-balanced} and Theorem~\ref{thm:arbitrary_m} can both be made quantitative.
    
    First, to make Theorem~\ref{thm:arbitrary_m} quantitative, we claim that if $A\subseteq [M]$ is not contained in a nonzero residue class modulo
    any integer $d\ge 2$, then $A$ contains a solution to equation~\eqref{eq: longer-schur} 
    for some $1\le m\le M-1$. Applying this with $M=2^r$ shows that in every $r$-coloring
    of $[2^r]$, the monochromatic equation in Theorem~\ref{thm:arbitrary_m} may be chosen
    with $m\le 2^r-1$.
    
    We remark that this claim cannot be improved to $m \neq M-2$. Indeed, the set $A = \{M-1,M\}\subseteq [M]$ is not contained in a nonzero residue class modulo any integer $d\ge 2$, yet $m = M-1$ is the minimum $m$ that yields solutions in $A$ to equation~\eqref{eq: longer-schur}. 

    We now prove the claim. By the argument above, the assumption on $A$ implies that $A$ contains a solution to equation~\eqref{eq: longer-schur} for at least one value of $m$. We first choose such a solution with $m$ minimal, and write it as an equality of multisets
    \[
        \sum_{x\in L}x=\sum_{y\in R}y,
        \qquad |L|=m+1,\qquad |R|=m,
    \]
    where $L$ and $R$ are both supported on $A$. Adjoin one zero to the shorter side, and put $P\coloneqq L$, $Q\coloneqq R\cup\{0\}$. Then $P,Q$ are multisets in $\{0,1,\dots,M\}$ satisfying
    \[
        |P|=|Q|=m+1,
        \qquad
        \sum P=\sum Q.
    \]
    This balanced identity is primitive in the following sense: there are no nonempty proper
    submultisets $P'\subset P$ and $Q'\subset Q$ such that $|P'|=|Q'|$ and $\sum P'=\sum Q'$. Indeed, if such $P',Q'$ existed and $0\in Q'$, deleting this zero from $Q'$ would give a smaller
    solution to one of our equations, contradicting the minimality of $m$; and if instead $0\notin Q'$,
    then passing to the complementary balanced identity $P\setminus P'$, $Q\setminus Q'$ gives a smaller balanced identity still containing the zero on the $Q$-side; deleting that zero
    again gives a smaller solution.

    Now, say
    \[
        P=\{p_1\le\cdots\le p_k\},\qquad
        Q=\{q_1\le\cdots\le q_k\},
        \qquad k=m+1,
    \]
    and define $d_i\coloneqq p_i-q_i$. Since $\sum P=\sum Q$, we have $\sum_i d_i=0$. Moreover, the primitivity property above implies
    that $(d_1,\dots,d_k)$ is actually a minimal zero-sum sequence over $\Z$, since no nonempty proper
    subcollection has sum zero. Let
    \[
        a\coloneqq \max\{d_i:d_i>0\},\qquad
        b\coloneqq \max\{-d_i:d_i<0\}.
    \]
    By a theorem of Lambert \cite{Lambert}
    (see also~\cite{Sissokho2014}), the number of positive terms among the $d_i$ is at most
    $b$, and the number of negative terms is at most $a$. The proof of this theorem is as follows. One can greedily reorder the terms of the sequence $(d_1,\dots,d_k)$ into $(e_1,\dots,e_k)$ so that the partial sums $s_i =\sum_{j=1}^i e_j$ satisfy $s_{i} \ge s_{i-1}$ if and only if $s_{i-1}\le 0$ for all $i \in [k]$. Notice that doing so implies that $s_{i-1}\in (-b,a]$ for all $i\in [k]$. Since $(d_1,\dots,d_k)$ is a minimal zero-sum sequence, the partial sums $s_0,s_1,\dots,s_{k-1}$ are pairwise distinct. At most $b$ of these are non-positive and at most $a$ of these are positive. Thus among the $d_i$ at most $b$ are positive and at most $a$ are negative. 
    
    In particular, this implies $k \le a+b$, and so to finish proving the claim, it suffices to check that $a+b\le M$. Choose indices $i,j$ such that $d_i=a$ and $d_j=-b$. If $i<j$, then $p_i\le p_j$ and $q_i\le q_j$, so
    \[
        a+b
        =(p_i-q_i)+(q_j-p_j)
        =(p_i-p_j)+(q_j-q_i)
        \le q_j-q_i\le M.
    \]
    If $j<i$, then similarly
    \[
        a+b=(p_i-p_j)+(q_j-q_i)\le p_i-p_j\le M.
    \]
 This concludes the proof. 

 On the other hand, from the proof of Theorem~\ref{thm: main-balanced} above, one can extract an analogue of~\cite[Theorem 1.2]{ACJMR25} which gets a much better bound  on $m$ in the regime where the interval is (much) larger than $2^r$.
Indeed, for $r \geq 3$ and $b \geq 2 e \ln r$, set
\begin{equation*}
    m_0  \coloneqq \frac{\ln r}{\ln \frac{b}{2 \ln r} + \ln \ln \frac{b}{2 \ln r}}.
\end{equation*}
If we have an $r$-coloring of $[N]$ with no monochromatic solution to equation \eqref{eq: longer-schur} for any $m \leq m_0$, then the proof above implies that 
\begin{align*}
    N < (2m_0)^r r^{r/m_0} 
    = \left(2 m_0 e^{\ln (r) / m_0} \right)^r = \left(b \cdot \frac{\ln \frac{b}{2 \ln r}}{ \ln \frac{b}{2 \ln r} + \ln \ln \frac{b}{2 \ln r}} \right)^r
        \leq b^r.
\end{align*}
Thus, it follows that for every $r \geq 3$ and $b \geq 2 e \ln r$, every $r$-coloring of the interval $[b^r]$ has a monochromatic solution to~\eqref{eq: longer-schur} for some $m \leq m_0$.

\section*{Acknowledgments}
C.P.\ was supported by NSF grant DMS-2246659. 
We would like to thank Lola Vescovo for her Math 532 final presentation on the work of Ko\'scuiszko~\cite{K25}, which inspired the present paper. 

We would also like to acknowledge the role of AI in preparing and enriching this manuscript. For example, Theorem \ref{thm: main-balanced} was initially supposed to be an upper bound of the form $S_{m}(r) \leq (2m+1)^{r} r^{r/m}+1$, using \cite[Lemma 2.1]{ACJMR25} as a blackbox. The improved Lemma~\ref{lem: main} was entirely produced by ChatGPT, 
as the result of an interaction that was initially meant to only clarify the proof from \cite{ACJMR25}. Similarly, the authors initially had an argument that the monochromatic equation in Theorem~\ref{thm:arbitrary_m} may be chosen with $m\le 2^{r+1}$ (using quantitative versions of B\'ezout's theorem). The idea to use Lambert's theorem to get the improved estimate $m \leq 2^{r}-1$ was due to ChatGPT.

\end{document}